\newtheorem{theorem}{Theorem}[section]
\newtheorem{lemma}[theorem]{Lemma}
\theoremstyle{definition}
\theoremstyle{remark}
\newtheorem{remark}[theorem]{Remark}
\numberwithin{equation}{section}
\begin{document}

\title{A note on the Harnack inequality for elliptic equations in divergence form}

\author{Dongsheng Li}
\address{}
\curraddr{School of Mathematics and Statistics, Xi'an Jiaotong University, Xi'an 710049, China}
\email{lidsh@mail.xjtu.edu.cn}
\thanks{This research is supported by NSFC 11171266.}

\author{Kai Zhang}
\address{School of Mathematics and Statistics, Xi'an Jiaotong University, Xi'an 710049, China}
\email{zkzkzk@stu.xjtu.edu.cn}

\subjclass[2010]{Primary 35J15, 35B65; Secondary 35D30}

\date{December 14, 2015}


\keywords{Elliptic equations, Harnack inequality}

\begin{abstract}

In 1957, De Giorgi \cite{De1} proved the H\"{o}lder continuity for elliptic equations in divergence form and Moser \cite{Mo1} gave a new proof in 1960. Next year, Moser \cite{Mo2} obtained the Harnack inequality. In this note, we point out that the Harnack inequality was hidden in \cite{De1}.
\end{abstract}

\maketitle

\section{The Harnack inequality}

Consider the following elliptic equation:
\begin{equation}\label{e1.1}
 (a^{ij}u_i)_j=0~~~~\mbox{in}~~Q_6,
\end{equation}
where $(a^{ij})_{n\times n}$ is uniformly elliptic with ellipticity constants $\lambda$ and $\Lambda$. In this note,  $Q_r(x_0)$ denotes the cube with center $x_0$ and side-length $r$, and $Q_r:=Q_r(0)$.

In 1961, Moser \cite{Mo2} obtained the following Harnack inequality:
\begin{theorem}\label{th1.3}
Let $u\geq 0$ be a weak solution of (\ref{e1.1}). Then
\begin{equation}\label{e1.5}
  \sup_{Q_{1}} u \leq C\inf_{Q_{1}} u,
\end{equation}
where $C$ depends only on $n$, $\lambda$ and $\Lambda$.
\end{theorem}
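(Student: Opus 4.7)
The strategy is to extract the Harnack inequality from the two cornerstone lemmas of \cite{De1}: the local $L^\infty$ bound for subsolutions and the measure-to-pointwise propagation lemma that drove the Hölder continuity argument. Once these are in hand, a covering argument in the spirit of Krylov--Safonov (or Bombieri--Giusti) upgrades the propagation lemma to a weak Harnack inequality, which together with the $L^\infty$ bound yields (\ref{e1.5}). In what follows I sketch the three steps.

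First I would record De Giorgi's local boundedness lemma: every subsolution $v$ of (\ref{e1.1}) obeys
\[
\sup_{Q_r} v \leq \frac{C}{(R-r)^{n/2}}\,\|v^+\|_{L^2(Q_R)},\qquad 0<r<R\leq 2,
\]
via the Caccioppoli-plus-iteration on the family of test functions $(v-k)^+$ that is at the heart of \cite{De1}. A routine nonlinear modification, inserting $u^q$ for small $q>0$, upgrades the right-hand side to an $L^{p_0}$ norm for any fixed $p_0>0$ to be chosen later.

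Next I would derive an \emph{expansion-of-positivity} estimate by applying De Giorgi's second (measure-to-pointwise) lemma to the nonnegative subsolution $w_k:=(k-u)^+$, which is indeed a subsolution since $-u$ is one and truncation preserves the property. If $|\{u\geq k\}\cap Q_1|$ exceeds a fixed fraction of $|Q_1|$, then $|\{w_k\leq 0\}\cap Q_1|$ is correspondingly large, so the lemma forces $\sup_{Q_{1/2}} w_k \leq \theta k$ for some $\theta<1$, i.e., $u\geq (1-\theta)k$ on $Q_{1/2}$. A geometric iteration on $k$ converts the positive-measure hypothesis $|\{u\geq 1\}\cap Q_1|\geq\mu|Q_1|$ into a pointwise lower bound $\inf_{Q_{1/2}} u \geq \gamma(\mu)>0$.

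The real obstacle is the final step: parlaying the expansion-of-positivity into a weak Harnack inequality $\|u\|_{L^{p_0}(Q_{3/2})} \leq C\inf_{Q_1} u$. I plan to do this through a Calderón--Zygmund-type dyadic stopping-time argument, producing a distribution inequality $|\{u\geq t\}\cap Q_{3/2}| \leq C t^{-p_0}(\inf_{Q_1} u)^{p_0}|Q_{3/2}|$ for some $p_0>0$: each dyadic cube on which $u$ exceeds a large level is, by the previous step, contained in a slightly larger cube on which $u$ is bounded below by a fixed fraction of that level, and iterating the resulting doubling yields the power decay. Integrating over $t$ gives the desired $L^{p_0}$ bound, and combining with the $L^{p_0}$-version of the first step closes the argument. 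The combinatorial bookkeeping in the covering --- in particular, verifying that every constant depends only on $n,\lambda,\Lambda$ and that the propagation is quantitative enough to produce a nontrivial $p_0$ --- is the delicate part; the key message of the note is that no tool beyond the two De Giorgi lemmas is needed, so in particular one can dispense with the John--Nirenberg lemma invoked in \cite{Mo2}.
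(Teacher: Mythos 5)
Your overall strategy coincides with the paper's: upgrade De Giorgi's $L^{\infty}$--$L^{2}$ bound (Theorem \ref{th1.1}) to an $L^{p_0}$ bound for arbitrarily small $p_0>0$, extract a weak Harnack inequality from the measure-to-pointwise lemma (Theorem \ref{th1.2}) via a Calder\'on--Zygmund covering, and combine the two. The key message — that De Giorgi's two lemmas already yield Harnack with no recourse to John--Nirenberg — is exactly the paper's.

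There are, however, two points where your sketch diverges or leaves a hole. First, for the $L^{2}\to L^{p_0}$ upgrade you propose to re-enter De Giorgi's iteration with modified test functions involving $u^{q}$. The paper avoids touching the original argument: it interpolates $\|u\|_{L^2}\leq\varepsilon\|u\|_{L^\infty}+C(\varepsilon)\|u\|_{L^{p_0}}$, feeds this into Theorem \ref{th1.1}, and then kills the $\varepsilon\|u\|_{L^\infty}$ term with a scaling/absorption trick based on the weighted quantity $\sup_{x\in Q_3}d_x^{n/p_0}|u(x)|$. That keeps Theorem \ref{th1.1} a black box, which is cleaner and more faithful to the premise of the note. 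Your version is also correct but requires modifying De Giorgi's proof rather than just citing it.

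Second, and more substantively, there is a directional gap in your measure-to-pointwise step. Applying the density lemma to $w_k=(k-u)^+$ produces a lower bound on the \emph{smaller} cube: measure information on $Q_1$ gives $\inf_{Q_{1/2}}u\geq\gamma(\mu)$. But the Calder\'on--Zygmund stopping-time argument you invoke in the final step — and which the paper uses in Lemma \ref{le2.2} — requires propagation from a dyadic cube $Q_r(x_0)$ to a strictly \emph{larger} cube $Q_{3r}(x_0)$ containing its predecessor; that is precisely how one concludes $m(A)\leq\tfrac12 m(B)$. The paper's Theorem \ref{th1.2} is stated in exactly this ``small-to-large'' form ($Q_1\to Q_3$), which plugs directly into the covering machine. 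Your derived lemma goes the other way, and converting it requires a genuine spreading-of-positivity (translate-and-iterate) argument — not merely the ``geometric iteration on $k$'' you mention, which only improves the constant $\gamma(\mu)$, not the geometry. You correctly anticipate in step 3 that you need the large-cube conclusion, so the fix is conceptually clear, but as written there is a missing lemma.
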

The method used in \cite{Mo2} is that first to estimate the upper and lower bound of $u$ in terms of $||u||_{L^{p_0}}$ and  $||u^{-1}||_{L^{p_0}}$ respectively by an iteration for some
$p_0>0$ and then to join these two estimates together to obtain (\ref{e1.5}) by the John-Nirenberg inequality.

In 1957, De Giorgi \cite{De1} proved the H\"{o}lder continuity for weak solutions of (\ref{e1.1}) and Moser \cite{Mo1} gave a new proof later. The following are two of the main results in \cite{De1} and \cite{Mo1} (see also \cite{De2}):
\begin{theorem}\label{th1.1}
Let $u\geq 0$ be a weak subsolution of (\ref{e1.1}). Then
\begin{equation}\label{e1.2}
  \|u\|_{L^{\infty}(Q_1)}\leq C \|u\|_{L^{2}(Q_3)},
\end{equation}
where $C$ depends only on $n$, $\lambda$ and $\Lambda$.
\end{theorem}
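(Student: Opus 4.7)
My plan is to prove Theorem \ref{th1.1} by Moser's $L^p$ iteration, which upgrades a Caccioppoli/Sobolev pair into a geometric gain of integrability. I will set $\chi := n/(n-2)$ if $n\geq 3$, and fix $\chi>1$ arbitrarily if $n=2$ (invoking $W^{1,2}\hookrightarrow L^{q}$ for every finite $q$). The centerpiece will be the reverse-H\"older inequality
\begin{equation*}
\|u\|_{L^{q\chi}(Q_r)} \leq \left(\frac{Cq}{R-r}\right)^{2/q}\|u\|_{L^{q}(Q_R)},
\end{equation*}
to be proved for every $q\geq 2$ and every pair of concentric cubes $Q_r\subset Q_R\subseteq Q_3$; iterating it along $q_k=2\chi^k$ and shrinking radii $r_k=1+2\cdot 2^{-k}$ will then deliver the theorem.

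First I will establish a Caccioppoli estimate for powers. Fixing $p\geq 1$, a truncation level $L>0$, and a cutoff $\eta\in C_c^\infty(Q_R)$ with $\eta\equiv 1$ on $Q_r$ and $|\nabla\eta|\leq 2/(R-r)$, I will plug $\varphi=\eta^2 u\, u_L^{2p-2}$ (with $u_L:=\min(u,L)$) into the weak subsolution formulation. After expanding $\nabla\varphi$, splitting the integrals along $\{u<L\}$ and $\{u\geq L\}$, and applying uniform ellipticity together with Young's inequality to absorb the cross term, the outcome will be
\begin{equation*}
\int_{Q_R}\eta^2\,u_L^{2p-2}|\nabla u|^2\,dx \leq \frac{C}{(R-r)^2}\int_{Q_R} u^{2p}\,dx,
\end{equation*}
with $C=C(n,\lambda,\Lambda)$ \emph{independent of $p$ and $L$}. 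Monotone convergence as $L\to\infty$ and the identity $|\nabla(u^p)|^2=p^2u^{2p-2}|\nabla u|^2$ will then upgrade this to the Caccioppoli bound $\int_{Q_R}\eta^2|\nabla(u^p)|^2\,dx \leq \frac{Cp^2}{(R-r)^2}\int_{Q_R} u^{2p}\,dx$.

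Next, since $\eta u^p\in W^{1,2}_0(Q_R)$, the Sobolev inequality $\|\eta u^p\|_{L^{2^*}}\leq C\|\nabla(\eta u^p)\|_{L^2}$ combined with the previous bound will yield
\begin{equation*}
\|u^p\|_{L^{2^*}(Q_r)} \leq \frac{Cp}{R-r}\|u^p\|_{L^2(Q_R)},
\end{equation*}
which is the target reverse-H\"older inequality after the substitution $q=2p$. Iterating along $(q_k,r_k)$ and taking logarithms will reduce the convergence of the accumulated constant to the finiteness of $\sum_k k\chi^{-k}$; this will give $\|u\|_{L^{q_k}(Q_{r_k})}\leq C\|u\|_{L^2(Q_3)}$ uniformly in $k$, and $k\to\infty$ will deliver the conclusion. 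The only non-routine point I anticipate is the admissibility of $\varphi=\eta^2 u\,u_L^{2p-2}$ as a Sobolev test function, since at the $k$-th stage $u$ is only known \emph{a priori} to lie in $W^{1,2}\cap L^{q_k}$ and not in $L^\infty$; the truncation at level $L$ is what keeps $\varphi\in W^{1,2}_0(Q_R)$ throughout, and the subsequent passage $L\to\infty$ is clean precisely because the constant in the truncated Caccioppoli inequality does not depend on $L$.
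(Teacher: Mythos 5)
The paper never proves Theorem \ref{th1.1}: it cites the statement to De Giorgi \cite{De1} and Moser \cite{Mo1} and then takes it (together with Theorem \ref{th1.2}) as a black box from which Theorem \ref{th1.3} is derived in Section 2. So there is no internal proof to compare your argument against, and the relevant comparison is with the literature. What you have written is a correct reconstruction of Moser's iteration from \cite{Mo1}, and the delicate points are handled properly: the truncation $u_L=\min(u,L)$ keeps $\varphi=\eta^2 u\,u_L^{2p-2}$ in $W^{1,2}_0$ at every stage even though $u$ is not yet known to be bounded; nonnegativity of $u$ makes $\varphi\geq 0$, as the weak-subsolution test requires; splitting $\nabla\varphi$ along $\{u<L\}$ and $\{u\geq L\}$ gives a good-term coefficient equal to $2p-1$ on $\{u<L\}$ and to $1$ on $\{u\geq L\}$, hence bounded below by $1$, and it is precisely because you absorb the cross term into this lower bound (rather than into the full $2p-1$) that the truncated Caccioppoli constant comes out independent of $p$ and $L$; monotone convergence as $L\to\infty$ then gives $\int\eta^2|\nabla(u^p)|^2\leq Cp^2(R-r)^{-2}\int u^{2p}$, Sobolev turns this into the reverse-H\"older factor $(Cq/(R-r))^{2/q}$, and the product of these factors along $q_k=2\chi^k$, $r_k=1+2^{1-k}$ converges because $\sum_k k\chi^{-k}<\infty$, with the starting datum $\|u\|_{L^2(Q_3)}$ and the limit $\|u\|_{L^\infty(Q_1)}$ exactly as in \eqref{e1.2}. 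The only genuinely distinct alternative worth flagging is De Giorgi's own route in \cite{De1}: rather than iterate on $L^{q_k}$ norms, he iterates on super-level sets $\{u>t_k\}$, applies the Caccioppoli inequality to the truncations $(u-t_k)^+$, and drives $m\{x: u(x)>t_k\}$ to zero at a super-geometric rate. Both arguments use the same Caccioppoli/Sobolev pair and produce a constant depending only on $n$, $\lambda$, $\Lambda$; yours trades the measure-theoretic bookkeeping for an integrability bookkeeping, which is arguably the cleaner of the two when, as here, the goal is the $L^2$-to-$L^\infty$ bound rather than oscillation decay.
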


\begin{theorem}\label{th1.2}
Let $u\geq 0$ be a weak supsolution of (\ref{e1.1}). Then for any $c_0>0$, there exists a constant $c$ depending only on $n$, $\lambda$, $\Lambda$ and $c_0$ such that
\begin{equation}\label{e1.3}
  m\{x\in Q_{1}: u(x)>c\}>c_0 \Rightarrow  u>1 \mbox{ in } Q_{3},
\end{equation}
where $m$ denotes the Lebesgue measure.
\end{theorem}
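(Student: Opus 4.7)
The plan is to reduce Theorem~\ref{th1.2} to Theorem~\ref{th1.1} through a logarithmic substitution and a De~Giorgi iteration. First, for a parameter $c \ge 1$ to be determined in terms of $c_0$, introduce the auxiliary function
\[
w := \bigl(\log(c/u)\bigr)^+ = \max\bigl(\log c - \log u,\, 0\bigr).
\]
A chain-rule computation---testing the weak supersolution inequality for $u$ against the nonnegative function $\eta^{2}/\max(u,\varepsilon)$ with nonnegative cutoff $\eta$, and letting $\varepsilon \downarrow 0$---shows that $-\log u$ satisfies a weak subsolution inequality with the same ellipticity constants, and even picks up a nonnegative quadratic term $a^{ij}(\log u)_i(\log u)_j$ that will feed a Caccioppoli estimate. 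Since truncation at zero preserves subsolutions, $w$ is itself a nonnegative weak subsolution of a divergence-form equation with ellipticity $\lambda,\Lambda$, and the measure hypothesis $m\{u>c\}>c_0$ translates into the geometric statement that $w$ vanishes on a subset of the relevant cube of measure at least $c_0$.

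Second, I would convert this vanishing set into a uniform $L^2$ bound on $w$. By the Poincar\'e inequality for Sobolev functions vanishing on a set of positive measure, combined with the Caccioppoli energy estimate for the subsolution $w$, one obtains
\[
\|w\|_{L^2(Q_r)} \le C(c_0,n,\lambda,\Lambda)
\]
on cubes of comparable size, after first truncating at a large height $w \wedge M$ to handle integrability near $\{u=0\}$ and then removing the truncation in the limit. Theorem~\ref{th1.1} applied to (a rescaled version of) $w$ then yields an $L^\infty$ bound $\|w\|_{L^\infty} \le M_0(c_0)$ on an intermediate sub-cube, whence $u \ge c\,e^{-M_0}$ there. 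Choosing $c$ large enough that $c\,e^{-M_0(c_0)} > 1$ produces the desired pointwise lower bound $u > 1$ on that intermediate cube.

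Finally, the lower bound must be propagated to all of $Q_3 \subset Q_6$. I would do this by a standard chaining argument along a finite covering of $Q_3$ by overlapping cubes---the number of cubes depending only on $n$---where at each step the lower bound just established on one cube provides the measure condition needed to apply the previous two paragraphs on the next cube. The main obstacle I foresee is the bookkeeping in this final step: the constant $c$ is fixed at the outset in terms of $c_0$, so one must check that the constants arising along the chain from the logarithmic substitution, Poincar\'e, Caccioppoli and Theorem~\ref{th1.1} close up consistently rather than degrading and forcing $c$ to be reselected at each iteration. The resolution is to make the initial choice of $c$ with enough margin above $\exp(M_0)$ that the improved lower bound from each step automatically strengthens, not weakens, the hypothesis for the next.
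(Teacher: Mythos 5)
The paper does not prove Theorem~\ref{th1.2}; it cites it as a known result from De~Giorgi~\cite{De1} and Moser~\cite{Mo1} and then uses it as a black box in the proof of Lemma~\ref{le2.2}, so there is no internal proof here to compare against. Your sketch is the classical logarithm route: show that $v=-\log u$ is a subsolution with the bonus \emph{absolute} gradient bound $\int \eta^2 |\nabla v|^2 \leq C(\lambda,\Lambda)\int|\nabla\eta|^2$ (no dependence on $v$ or $u$), truncate to $w=(\log(c/u))^+$, invoke Poincar\'e using the positive-measure zero set of $w$, and conclude via the local boundedness in Theorem~\ref{th1.1}. This is valid and close in spirit to Moser's argument in \cite{Mo2}; De~Giorgi's own argument in \cite{De1} is a different, directly measure-theoretic iteration on level sets that does not pass through $\log u$.

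Two refinements to your write-up. First, name the correct energy estimate: the generic Caccioppoli inequality for the subsolution $w$ reads $\|\nabla w\|_{L^2(Q_{r/2})}\leq C\|w\|_{L^2(Q_r)}$, and combined with Poincar\'e it is circular. What closes the loop is the log-specific bound quoted above, whose right-hand side is absolute; the quadratic term $a^{ij}(\log u)_i(\log u)_j$ you mention is precisely where this comes from, and it should be presented as the engine of the $L^2$ bound rather than as feeding an ordinary Caccioppoli estimate. Second, the chaining in your final paragraph is unnecessary. The absolute gradient bound for $\nabla\log u$ holds on any interior cube, say $Q_5$, so apply Poincar\'e directly there: the set $\{w=0\}\cap Q_1$ has measure $>c_0$, hence a fixed fraction of $m(Q_5)$, giving $\|w\|_{L^2(Q_5)}\leq C(c_0,n,\lambda,\Lambda)$ with no dependence on $c$. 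A covering of $Q_3$ by cubes whose triples sit in $Q_5$, together with Theorem~\ref{th1.1}, then yields $\|w\|_{L^\infty(Q_3)}\leq M_0(c_0,n,\lambda,\Lambda)$ in one pass; choosing $c=e^{M_0+1}$ gives $u\geq e>1$ on all of $Q_3$. No propagation along a chain of cubes, and hence none of the bookkeeping you worry about, is needed.
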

In this note, we will prove Theorem \ref{th1.3} by the above two theorems directly. That is, De Giorgi's proof implies Harnack inequality. This was first noticed by DiBenedetto \cite{P1}. Some other new approaches to Harnack inequality can be founded in \cite{P2} and \cite{P3}, where
U. Gianazza and V. Vespri \cite{P3} requires only a qualitative boundedness of solutions, which is different from here.

\section{Proof of Theorem{\ref{th1.3}}}
In the following, we present the key points for obtaining Theorem \ref{th1.3} by Theorem \ref{th1.1} and \ref{th1.2}. First, we show that Theorem \ref{th1.1} implies the following local maximum principle:
\begin{lemma}\label{le2.1}
Let $u\geq 0$ be a weak subsolution of (\ref{e1.1}) and $p_0>0$. Then
\begin{equation}\label{e1.6}
    \|u\|_{L^{\infty}(Q_{1})}\leq C \|u\|_{L^{p_0}(Q_3)},
\end{equation}
where $C$ depends only on $n$, $\lambda$, $\Lambda$ and $p_0$.
\end{lemma}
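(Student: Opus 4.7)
The plan is to reduce to Theorem \ref{th1.1} by a standard rescaling together with an interpolation--absorption argument, so that the $L^2$ norm on the right-hand side can be replaced by any $L^{p_0}$ norm with $p_0>0$. The case $p_0\geq 2$ is immediate from H\"older's inequality applied to Theorem \ref{th1.1}, so the real work is the subcritical case $0<p_0<2$.

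\textbf{Step 1: Rescaled sup estimate on nested cubes.} For $x_0\in\mathbb{R}^n$ and $\rho>0$, the function $v(y):=u(x_0+\rho y)$ is a nonnegative weak subsolution of an equation of the same type (with the same ellipticity constants) on $Q_3$ whenever $Q_{3\rho}(x_0)\subset Q_3$. Applying Theorem \ref{th1.1} to $v$ and changing variables yields
\begin{equation*}
\sup_{Q_\rho(x_0)} u \leq C\,\rho^{-n/2}\,\|u\|_{L^2(Q_{3\rho}(x_0))}.
\end{equation*}
Covering a concentric cube $Q_r$ by cubes $Q_\rho(x_0)$ with $\rho=(R-r)/3$ and $x_0\in Q_r$ gives, for all $1\leq r<R\leq 3$,
\begin{equation*}
\sup_{Q_r} u \leq \frac{C}{(R-r)^{n/2}}\,\|u\|_{L^2(Q_R)}.
\end{equation*}

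\textbf{Step 2: Interpolation and absorption.} For $0<p_0<2$ I would interpolate using
\begin{equation*}
\|u\|_{L^2(Q_R)}^2 \leq \bigl(\sup_{Q_R} u\bigr)^{2-p_0}\,\|u\|_{L^{p_0}(Q_R)}^{p_0},
\end{equation*}
then apply Young's inequality with exponents $2/(2-p_0)$ and $2/p_0$ to split the product so that the $\sup_{Q_R} u$ factor appears with a small coefficient. This produces, for all $1\leq r<R\leq 3$,
\begin{equation*}
\sup_{Q_r} u \leq \tfrac{1}{2}\sup_{Q_R} u + \frac{C}{(R-r)^{n/p_0}}\,\|u\|_{L^{p_0}(Q_3)},
\end{equation*}
with $C=C(n,\lambda,\Lambda,p_0)$.

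\textbf{Step 3: Iteration.} I would then invoke the standard iteration lemma: if a bounded nonnegative function $f$ on $[r_0,R_0]$ satisfies $f(r)\leq \tfrac{1}{2}f(R)+A(R-r)^{-\alpha}$ for all $r_0\leq r<R\leq R_0$, then $f(r_0)\leq C(\alpha)\,A(R_0-r_0)^{-\alpha}$. Applied with $f(r)=\sup_{Q_r} u$, $r_0=1$, $R_0=3$ and $\alpha=n/p_0$, this yields \eqref{e1.6}. The preliminary boundedness of $f$ needed to run the iteration comes from Step 1 (with $r=1$, $R=3$), provided one first truncates $u$ to ensure finiteness and then passes to the limit; alternatively one works directly with $\sup_{Q_r}u$, which is finite by Theorem \ref{th1.1}.

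\textbf{Main obstacle.} No single step is genuinely hard, but the conceptual linchpin is the interpolation--Young--iteration triple in Steps 2--3: one must arrange the exponents so that the bad $L^\infty$ factor can be absorbed into the left-hand side, and this is what forces the passage through a family of nested cubes rather than a single one. Verifying the finiteness of $\sup_{Q_r}u$ for $1\leq r<3$ (so that absorption is legitimate) is the only genuine subtlety, and this is handled by Theorem \ref{th1.1} itself.
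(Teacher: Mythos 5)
Your proof is correct. Both you and the paper begin from the same two ingredients—scaled copies of Theorem \ref{th1.1} on interior cubes, and the elementary interpolation $\|u\|_{L^2}^2 \leq (\sup u)^{2-p_0}\|u\|_{L^{p_0}}^{p_0}$—and both must then absorb the resulting $L^\infty$ factor on a larger cube back into the left-hand side. Where you diverge is in the mechanism of absorption: you derive the estimate $\sup_{Q_r}u\leq \tfrac12\sup_{Q_R}u+C(R-r)^{-n/p_0}\|u\|_{L^{p_0}(Q_3)}$ over nested cubes $1\leq r<R\leq 3$ and then invoke the standard iteration lemma (Giaquinta-style), whereas the paper instead considers the weighted quantity $\sup_{x\in Q_3} d_x^{n/p_0}|u(x)|$, with $d_x=\mathrm{dist}(x,\partial Q_3)$, and performs the absorption in a single step by choosing $\varepsilon C(n)<1/2$ in a scaled interpolation inequality. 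These are two standard implementations of the same absorption idea: the weighted-supremum trick is essentially a one-shot proof of the iteration lemma specialized to this setting, so it is self-contained and avoids citing an external lemma, while your nested-cube iteration is perhaps more familiar and modular. Your handling of the finiteness issue (needed to justify subtracting $\tfrac12\sup_{Q_R}u$) is also sound—Theorem \ref{th1.1} gives it directly—and the same caveat is implicitly present in the paper's argument, where the weighted supremum must be finite before one can absorb.
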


\begin{proof}
By the interpolation for $L^p$ functions and \eqref{e1.2}, for any $\varepsilon >0$, we have
\begin{equation*}
    \|u\|_{L^{\infty}(Q_{1})}\leq  \varepsilon \|u\|_{L^{\infty}(Q_{3})}+C(\varepsilon) \|u\|_{L^{p_0}(Q_3)}
\end{equation*}
whose scaling version is
\begin{equation}\label{e2.2}
    r^{\frac{n}{p_0}}\|u\|_{L^{\infty}(Q_{r}(x_0))}\leq  \varepsilon r^{\frac{n}{p_0}}\|u\|_{L^{\infty}(Q_{3r}(x_0))}+C(\varepsilon) \|u\|_{L^{p_0}(Q_{3r}(x_0))}
\end{equation}
for any $Q_{6r}(x_0)\subset Q_6$.

Given $x_0\in Q_3$, denotes by $d_{x_0}$ the distance between $x_0$ and $\partial Q_3$. Then the cube $Q_{6r}(x_0)\subset Q_3$ where $r=d_{x_0}/3\sqrt{n}$, and we have
\begin{equation*}
\begin{aligned}
d_{x_0}^{\frac{n}{p_0}}|u(x_0)|&\leq C(n)r^{\frac{n}{p_0}}\|u\|_{L^{\infty}(Q_{r}(x_0))}\\
&\leq  \varepsilon C(n) r^{\frac{n}{p_0}}\|u\|_{L^{\infty}(Q_{3r}(x_0))}+C(\varepsilon) \|u\|_{L^{p_0}(Q_{3r}(x_0))} (\mbox{By } \eqref{e2.2})\\
&\leq \varepsilon C(n) \sup_{x\in Q_3} d_{x}^{\frac{n}{p_0}}|u(x)|+C(\varepsilon) \|u\|_{L^{p_0}(Q_{3})}.
\end{aligned}
\end{equation*}
Take the supremum over $Q_3$ and  $\varepsilon$ small such that $\varepsilon C(n)<1/2$. Then,
\begin{equation*}
  \sup_{x\in Q_3} d_{x}^{\frac{n}{p_0}}|u(x)|\leq C\|u\|_{L^{p_0}(Q_{3})},
\end{equation*}
which implies \eqref{e1.6}.
\end{proof}

Next, we show that Theorem \ref{th1.2} implies the weak Harnack inequality:
\begin{lemma}\label{le2.2}
  Let $u\geq 0 $ be a weak supsolution of (\ref{e1.1}). Then
\begin{equation}\label{e1.7}
   \|u\|_{L^{p}(Q_{1})}\leq C\inf_{Q_{3}} u,
\end{equation}
where $p>0$ and $C$ depend only on $n$, $\lambda$ and $\Lambda$.
\end{lemma}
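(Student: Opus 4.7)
The plan is to deduce the weak Harnack inequality \eqref{e1.7} from Theorem \ref{th1.2} by extracting geometric decay of the distribution function of $u$ and then invoking the layer-cake formula, in the spirit of the Krylov--Safonov argument.

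By homogeneity I would first normalize so that $\inf_{Q_3} u = 1$, reducing the task to a uniform bound $\|u\|_{L^p(Q_1)} \le C$ with $p, C$ depending only on $n, \lambda, \Lambda$. The key input is the rescaled contrapositive form of Theorem \ref{th1.2}: whenever $Q_{6r}(y) \subset Q_6$ and $\inf_{Q_{3r}(y)} u \le M$, then for any prescribed $\mu \in (0,1)$,
\[
  m\{x \in Q_r(y) : u(x) > \sigma M\} \le \mu\, r^n,
\]
with $\sigma = \sigma(\mu) > 1$ determined by Theorem \ref{th1.2}. Equivalently, if the density of $\{u > \sigma M\}$ in $Q_r(y)$ exceeds $\mu$, then $\inf_{Q_{3r}(y)} u > M$.

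The heart of the proof is the claim that
\[
  m\{x \in Q_1 : u(x) > \sigma^k\} \le \theta^k, \qquad k = 0, 1, 2, \ldots,
\]
for some $\theta \in (0,1)$ depending only on $n, \lambda, \Lambda$. The case $k = 0$ is trivial and the case $k = 1$ follows from the rescaled estimate above applied with $r = 1$, $y = 0$, $M = 1$. For the induction step I would use a stacked-cubes / Calder\'on--Zygmund covering argument: decompose $E_{k+1} := \{u > \sigma^{k+1}\} \cap Q_1$ by a Vitali family of dyadic subcubes on which the density of $E_{k+1}$ just exceeds $\mu$; by the reformulation above, the factor-$3$ expansion of each selected subcube is contained in $F_k := \{u > \sigma^k\} \cap Q_1$, and a standard growing-ink-spots estimate then yields $m(E_{k+1}) \le \theta\, m(F_k) \le \theta^{k+1}$. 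Once this geometric decay is established, the layer-cake formula gives $\|u\|_{L^p(Q_1)} \le C$ for any $p$ with $\theta \sigma^p < 1$.

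The main difficulty will be the covering step: Theorem \ref{th1.2} produces a pointwise lower bound on a cube three times larger than the cube supporting the measure estimate, so these factor-$3$ expansions do not fit cleanly into the usual dyadic hierarchy. One has to arrange the Vitali selection so that every relevant expansion lies inside $Q_6$ (the domain of the equation) and that double counting from overlapping expansions is controlled. This bookkeeping is the classical technical core of Krylov--Safonov-type iterations and is where all the quantitative dependence of $p$ and $C$ on $n, \lambda, \Lambda$ is fixed.
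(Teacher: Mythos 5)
Your proposal is essentially the paper's own argument: normalize $\inf_{Q_3} u = 1$, establish geometric decay of $m\{x\in Q_1 : u(x) > c^k\}$ by induction using the scaled contrapositive of Theorem \ref{th1.2} together with a Calder\'{o}n--Zygmund cube decomposition, and conclude via the layer-cake formula. The bookkeeping you flag as the ``main difficulty'' in the covering step is already packaged in the Calder\'{o}n--Zygmund lemma of Caffarelli--Cabr\'{e} (\cite[Lemma 4.2]{C-C}), which the paper invokes directly: its hypothesis concerns the dyadic predecessor $\tilde{Q}$ of a selected cube $Q_r(x_0)$, and $\tilde{Q} \subset Q_{3r}(x_0)\cap Q_1$ automatically whenever $Q_r(x_0)$ is a dyadic subcube of $Q_1$, so the factor-$3$ expansion fits into the dyadic hierarchy with no extra Vitali-type selection.
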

\begin{proof}
Without loss of generality, we assume that $\inf_{Q_3} u=1$ and we only need to prove that there exists a constant $c$ depending only on $n$, $\lambda$ and $\Lambda$ such that
\begin{equation}\label{e1.8}
  m\{x\in Q_1: u(x)>c^k\} \leq \frac{1}{2^k},
\end{equation}
for $k=1,2,...$.

We prove (\ref{e1.8}) by induction. Take $c_0=1/2$ in Theorem \ref{th1.2}. Then there exists a constant $c$ depending only on $n$, $\lambda$ and $\Lambda$ such that \eqref{e1.3} holds. Hence, \eqref{e1.8} holds for $k=1$ since we assume that  $\inf_{Q_3} u=1$. Suppose that (\ref{e1.8}) holds for $k\leq k_0-1$. Let
\begin{equation*}
  A:=\{x\in Q_1: u(x)>c^{k_0}\}  \mbox{ and } B:=\{x\in Q_1: u(x)>c^{k_0-1}\}.
\end{equation*}
We need to prove $m(A)\leq  m(B)/2$. By the Calder\'{o}n-Zygmund cube decomposition (see \cite[Lemma 4.2]{C-C}), we only need to prove that for any $Q_{r}(x_0)\subset Q_1$,
\begin{equation*}\label{e1.10}
  m(A\cap Q_r(x_0))>\frac{1}{2} m(Q_r(x_0)) \Rightarrow Q_{3r}(x_0)\cap Q_1\subset B,
\end{equation*}
which is exactly the scaling version of \eqref{e1.3} for $v=u/c^{k_0-1}$.
\end{proof}

Now, Theorem \ref{th1.3} follows clearly by combining (\ref{e1.6}) and (\ref{e1.7}).

\begin{remark}
``$u\geq 0$'' can be removed in Lemma \ref{le2.1} and a corresponding estimate for $u^+$ holds.
As for elliptic equations in non-divergence form, we also have
the local maximum principle (Lemma \ref{le2.1}) and the weak Harnack inequality (Lemma \ref{le2.2}) respectively
(see \cite[Theorem 4.8]{C-C}). In fact, this note is inspired by \cite{C-C}.
\end{remark}

\bibliographystyle{amsplain}

\end{document}